\newtheorem{thm}{Theorem}[section]
\newtheorem{conj}[thm]{Conjecture}
\newtheorem{lem}[thm]{Lemma}
\theoremstyle{definition} 
\newtheorem{defn}[thm]{Definition}
\theoremstyle{remark}
\newtheorem{rem}[thm]{Remark}
\newtheorem*{ack}{Acknowledgments}
\title{Notes on direct images of pluricanonical bundles}
\author{Sho Ejiri}
\address{Department of Mathematics, Graduate School of Science, Osaka Metropolitan University, Osaka City, Osaka 558-8585, Japan}
\email{shoejiri.math@gmail.com}
\begin{document}
\maketitle
\markboth{SHO EJIRI}{Notes on direct images of pluricanonical bundles}
\begin{abstract}
In this note, we generalize slightly Popa--Schnell's theorem regarding to direct images of pluricanonical bundles to the case when the ample line bundle is not globally generated. 
We also treat the case of positive characteristic. 
\end{abstract}
\section{Introduction}
Popa and Schnell~\cite{PS14} proposed a conjecture 
that relativizes Fujita's freeness conjecture:
\begin{conj}[\textup{\cite[Conjecture~1.3]{PS14}}] \label{conj:PS}
Let the base field be an algebraically closed field of characteristic zero. 
Let $f:X\to Y$ be a morphism from a smooth projective variety $X$ 
to a smooth projective variety $Y$ of dimension $n$. 
Let $\mathcal L$ be an ample line bundle on $Y$. 
Then for every $m\ge 1$, the sheaf 
$$
f_*\omega_X^m \otimes \mathcal L^l
$$
is generated by its global sections for $l\ge m(n+1)$. 
\end{conj}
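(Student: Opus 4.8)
The plan is to prove the conjecture by the standard reduction to pointwise separation of global sections, combined with a Fujita-type construction of isolated non-klt centers. Fix a point $y\in Y$; it suffices to show that the evaluation map
$$
H^0\!\left(Y,\, f_*\omega_X^m \otimes \mathcal L^l\right) \longrightarrow \left(f_*\omega_X^m \otimes \mathcal L^l\right)\otimes k(y)
$$
is surjective for every $l\ge m(n+1)$. By cohomology and base change together with the usual long-exact-sequence argument, this surjectivity follows once one proves the vanishing $H^1\!\bigl(Y,\, f_*\omega_X^m \otimes \mathcal L^l \otimes \mathcal J\bigr)=0$ for a suitable ideal sheaf $\mathcal J\subseteq\mathcal O_Y$ cosupported at $y$. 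The whole problem is thus to produce such a $\mathcal J$ and to verify the vanishing.

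First I would dispose of the exponent $m$ by the cyclic-cover trick. Choosing a general divisor in an appropriate linear system on $X$ and forming the associated $m$-fold cyclic cover, one relates $\omega_X^m$ to the canonical sheaf of a resolution of the cover; the twist $\mathcal L^{m(n+1)}$ then becomes, upstairs, a twist by $\mathcal L^{n+1}$, which is exactly why the conjectured bound is linear in $m$. This reduction is indifferent to whether $\mathcal L$ is globally generated, and it is the routine part of the argument. After it, the task is a relative Fujita-freeness statement for honest canonical bundles with a twist of the form $\mathcal L^{n+1}$.

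For that statement I would work with an effective $\mathbb Q$-divisor $\Delta\equiv_{\mathbb Q} c\,\mathcal L$ with $c<n+1$ whose non-klt locus is isolated at $y$: pulling $\Delta$ back along $f$, adding the relative canonical divisor, and invoking Nadel vanishing relative to $f$ yields the required $H^1$-vanishing after pushing forward. The positivity needed on the $X$-side, namely that $f_*\omega_{X/Y}^m$ is weakly positive in the sense of Viehweg, is available unconditionally, so once $\Delta$ is in hand the pushforward and vanishing steps cause no trouble.

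The hard part, and the reason this is still only a conjecture, is constructing $\Delta$ with the \emph{linear} coefficient $c<n+1$ using only the ampleness of $\mathcal L$. When $\mathcal L$ happens to be globally generated one takes $\Delta$ to be an average of $n+1$ general members of $|\mathcal L|$ vanishing at $y$, which instantly produces the required isolated non-klt center; this is the mechanism behind the known results of Popa and Schnell. For merely ample $\mathcal L$ the linear system $|\mathcal L|$ need not contain any divisor through $y$, so the singularity of $\Delta$ must be manufactured intrinsically, by the Angehrn--Siu/Helmke technique of successively cutting down the dimension of the non-klt locus. As presently understood, that technique forces a coefficient of order $c\sim\tfrac12 n(n+1)$, quadratic in $n$; securing the linear coefficient $c<n+1$ demanded by the bound $l\ge m(n+1)$ is precisely the relative form of Fujita's freeness conjecture. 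I expect this linear control of the non-klt coefficient---whether by sharpening the inductive Angehrn--Siu estimates or by extracting extra positivity from the Hodge-theoretic structure of $f_*\omega_X^m$---to be the decisive obstacle for the conjecture in its stated, full generality.
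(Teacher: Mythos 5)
The statement you are trying to prove is Conjecture~\ref{conj:PS} of Popa--Schnell, which the paper states as an \emph{open conjecture} and does not prove; so there is no proof in the paper to compare against, and your proposal, by its own admission, is not a proof either. You have correctly identified the standard attack (reduce to separating sections at a point $y$, produce an ideal sheaf $\mathcal J$ via an effective $\mathbb Q$-divisor $\Delta \equiv_{\mathbb Q} c\mathcal L$ with isolated non-klt center at $y$, and kill $H^1$ by a Nadel/Ambro-type vanishing after pushing forward), and you have correctly located the decisive gap: when $\mathcal L$ is only ample, the Angehrn--Siu/Helmke cutting-down technique produces $c$ of quadratic order in $n$, not the linear $c<n+1$ that the bound $l\ge m(n+1)$ requires. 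That is exactly the relative shadow of Fujita's freeness conjecture, and it is why the statement remains conjectural; the papers \cite{Den20, Dut20, DM19} cited in the introduction carry out essentially your program and obtain precisely the weaker, non-linear bounds you predict. One further caution: your cyclic-cover step, as sketched, is not how the exponent $m$ is handled in any of the known proofs, and it is doubtful it works as stated --- the cover requires an actual divisor in a linear system attached to $\omega_X^m$ twisted by something globally generated, which is not available for free, and the claim that $\mathcal L^{m(n+1)}$ ``becomes $\mathcal L^{n+1}$ upstairs'' would need a genuine argument.

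It is worth contrasting your strategy with how the paper proves its \emph{provable} weakenings (Theorem~\ref{thm:general}, following \cite{PS14}): no cyclic covers and no non-klt centers appear at all. Instead, one takes the image $\mathcal M$ of $f^*f_*\mathcal B \to \mathcal B$ for $\mathcal B = \mathcal O_X(m(K_X+\Delta))$, defines $s$ as the \emph{smallest} twist making $f_*\mathcal B \otimes \mathcal L^s$ globally generated, uses a general member $D \in |\mathcal M \otimes f^*\mathcal L^s|$ together with the vanishing theorem for log canonical pairs to show $H^i(Y, f_*\mathcal B \otimes \mathcal L^l)=0$ for $l > \frac{m-1}{m}s$, and then closes a bootstrap via Castelnuovo--Mumford $0$-regularity with respect to $\mathcal L^j$: the self-referential inequality $s \le \frac{m-1}{m}s + jn + 1$ yields $s \le m(jn+1)$. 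The exponent $m$ is absorbed by the fractional scaling $\frac{m-1}{m}D$, which is the mechanism your cyclic cover was meant to supply. The price of this route is the factor $j$ (the freeness index of $\mathcal L$), so it recovers the conjecture only when $|\mathcal L|$ is free; eliminating $j$ by your program would require exactly the linear non-klt coefficient you flag as the open obstacle.
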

They proved the conjecture affirmatively when $|\mathcal L|$ is free: 
\begin{thm}[\textup{\cite[Theorem~1.4]{PS14}}] \label{thm:PS}
Let the base field be an algebraically closed field of characteristic zero. 
Let $f:X\to Y$ be a morphism from a smooth projective variety $X$ 
to a projective variety $Y$ of dimensions $n$. 
Let $\mathcal L$ be an ample line bundle on $Y$ with $|\mathcal L|$ free. 
Then for every $m\ge 1$, the sheaf 
$$
f_*\omega_X^m \otimes \mathcal L^l
$$
is generated by its global sections for $l\ge m(n+1)$. 
\end{thm}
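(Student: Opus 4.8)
The plan is to deduce global generation from a Castelnuovo--Mumford regularity estimate and to establish that estimate by induction on $n=\dim Y$, using the freeness of $\mathcal L$ to cut down by a general divisor. Since $\mathcal L$ is ample and globally generated, Mumford's regularity criterion applies: a coherent sheaf $\mathcal F$ on $Y$ that is $0$-regular with respect to $\mathcal L$, i.e.\ satisfies $H^i(Y,\mathcal F\otimes\mathcal L^{-i})=0$ for all $i>0$, is globally generated. Taking $\mathcal F=f_*\omega_X^m\otimes\mathcal L^l$ and noting that $H^i$ vanishes automatically for $i>n$, the theorem reduces to the vanishing
\begin{equation*}
H^i\bigl(Y,\,f_*\omega_X^m\otimes\mathcal L^{k}\bigr)=0\qquad(i>0)
\end{equation*}
in the range $k\ge m(n+1)-n$: for $l\ge m(n+1)$ the twists occurring in the $0$-regularity condition are $\mathcal L^{l-i}$ with $1\le i\le n$, and $l-i\ge m(n+1)-n$. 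I will call this vanishing statement $(\star)$.

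First I would record the base of the induction. When $m=1$ the required range is $k\ge1$, so $\mathcal L^{k}$ is ample and $(\star)$ is exactly Koll\'ar's vanishing theorem for $f_*\omega_X$; this is the engine that the inductive mechanism will feed as the dimension drops, the case $n=0$ being trivial. For the inductive step I would choose, using Bertini and the freeness of $\mathcal L$ (here characteristic zero enters), a general member $H\in|\mathcal L|$ such that $H$ and $X_H:=f^{-1}(H)$ are smooth, and set $f_H:=f|_{X_H}\colon X_H\to H$. Adjunction gives $\omega_{X_H}\cong(\omega_X\otimes f^*\mathcal L)|_{X_H}$, hence $\omega_X^m|_{X_H}\cong\omega_{X_H}^m\otimes f_H^*(\mathcal L^{-m}|_H)$, and therefore (granting the base-change identification discussed below)
\begin{equation*}
\bigl(f_*\omega_X^m\otimes\mathcal L^{k+1}\bigr)\big|_H\;\cong\;f_{H*}\omega_{X_H}^m\otimes\mathcal L^{\,k+1-m}\big|_H .
\end{equation*}
Feeding this into the restriction sequence $0\to f_*\omega_X^m\otimes\mathcal L^{k}\to f_*\omega_X^m\otimes\mathcal L^{k+1}\to (f_*\omega_X^m\otimes\mathcal L^{k+1})|_H\to0$ and running a descending induction on $k$ (started by Serre vanishing), the term on $H$ has the same shape as $(\star)$ one dimension down, with twist $k+1-m$. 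Since $\dim H=n-1$, the inductive hypothesis kills its higher cohomology once $k+1-m\ge m\bigl((n-1)+1\bigr)-(n-1)=mn-n+1$, i.e.\ precisely when $k\ge m(n+1)-n$: the factor $m$ lost to adjunction is exactly what produces the coefficient $m(n+1)$. The long exact sequence then yields $(\star)$ for $i\ge2$, while the remaining $i=1$ case and the global generation itself are obtained by bootstrapping through the regularity formalism, where $0$-regularity forces surjectivity of the relevant restriction maps on global sections.

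The main obstacle I anticipate is the base-change identification $(f_*\omega_X^m)|_H\cong f_{H*}\omega_{X_H}^m\otimes\mathcal L^{-m}|_H$. Because $f$ need not be flat, the natural map $(f_*\omega_X^m)|_H\to f_{H*}(\omega_X^m|_{X_H})$ is not formally an isomorphism, and controlling the discrepancy --- equivalently, ensuring that restriction to a general $H$ commutes both with the formation of $f_*\omega_X^m$ and with the adjunction twist --- is the delicate point on which the whole induction rests. I would address it by working with $H$ sufficiently general, so that $X_H$ is smooth and meets the relevant loci transversally, and invoking a compatibility statement for pushforwards of pluricanonical sheaves under restriction to a general divisor; the interaction of this step with the $i=1$ cohomology is the other place where care is needed.
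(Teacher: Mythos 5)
Your reduction of global generation to the vanishing statement $(\star)$ via Mumford's $0$-regularity criterion is the same first step as the paper's argument (the paper proves the more general Theorem~\ref{thm:general}, of which this theorem is the case $\Delta=0$, $j=1$), and your base case $m=1$ via Koll\'ar vanishing is correct. The gap is in your proof of $(\star)$ for $m\ge 2$: the hyperplane induction does not close at $i=1$. In your restriction sequence, killing $H^1(Y,f_*\omega_X^m\otimes\mathcal L^{k})$ requires the restriction map $H^0(Y,f_*\omega_X^m\otimes\mathcal L^{k+1})\to H^0(H,(f_*\omega_X^m\otimes\mathcal L^{k+1})|_H)$ to be surjective; but since $H^1(Y,f_*\omega_X^m\otimes\mathcal L^{k+1})=0$ by your descending induction, that surjectivity is \emph{equivalent} to the vanishing $H^1(Y,f_*\omega_X^m\otimes\mathcal L^{k})=0$ you are trying to establish, so the appeal to ``bootstrapping through the regularity formalism'' is circular. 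Regularity theory propagates vanishing \emph{upward} in the twist (from $0$-regular to $1$-regular), whereas your descending induction from Serre vanishing must push it \emph{downward}, and that direction needs a genuine vanishing input at the bottom. Since cutting by $H$ never lowers $m$ (adjunction turns $\omega_X^m|_{X_H}$ into $\omega_{X_H}^m$ times a line-bundle twist), your only vanishing theorem, Koll\'ar's for $m=1$, is never brought to bear on $m\ge2$. Incidentally, the base-change identification you single out as the main obstacle is the unproblematic part: for general $H$ multiplication by its defining section is injective on $R^1f_*\omega_X^m$, so the natural map is an isomorphism.

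The missing idea is the mechanism the paper (following Popa--Schnell) uses to prove precisely your $(\star)$. Let $s$ be the \emph{smallest} integer such that $f_*\omega_X^m\otimes\mathcal L^{s}$ is globally generated; after a log resolution making the relative base ideal equal to $\mathcal O_X(-E)$, choose a general smooth $D\in|\omega_X^m(-E)\otimes f^*\mathcal L^{s}|$ and write $mK_X+lf^*L\sim_{\mathbb Q}K_X+\frac{m-1}{m}(D+E)+\bigl(l-\frac{m-1}{m}s\bigr)f^*L$. A Koll\'ar/Ambro--Fujino-type vanishing theorem for the resulting log canonical pair then gives $H^i(Y,f_*\omega_X^m\otimes\mathcal L^{l})=0$ for $i>0$ and $l>\frac{m-1}{m}s$, whence $0$-regularity forces $s\le\frac{m-1}{m}s+n+1$, i.e.\ $s\le m(n+1)$. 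This interleaving of global generation and vanishing through the extremal quantity $s$ is exactly what replaces the $i=1$ step your induction cannot supply.
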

Several results regarding to the above theorem are known 
(\cite{Den20, Dut20, DM19, Iwa17, Kaw02, Kol86}). 
In this note, we generalize slightly the above theorem to the case when $|\mathcal L|$ is not necessarily free. 
\begin{thm}[\textup{Theorem~\ref{thm:general}}] \label{thm:main_0}
Let the base field be an algebraically closed field of characteristic zero. 
Let $f:X\to Y$ be a morphism from a smooth projective variety $X$ 
to a projective variety $Y$ of dimension $n$. 
Let $L$ be an ample line bundle on $Y$. 
Let $j$ be the smallest positive integer such that $|\mathcal L^j|$ is free. 
Then for every $m\ge1$, the sheaf
$$
f_*\omega_X^m \otimes \mathcal L^l
$$
is generated by its global sections for $l\ge m(jn +1)$. 
\end{thm}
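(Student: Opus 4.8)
The plan is to deduce global generation from Castelnuovo--Mumford regularity measured against the \emph{free} ample bundle $\mathcal M:=\mathcal L^j$, whose freeness is guaranteed by the very definition of $j$. Recall Mumford's theorem: if a coherent sheaf $\mathcal F$ on $Y$ is $0$-regular with respect to a globally generated ample line bundle $\mathcal M$, i.e. $H^i(Y,\mathcal F\otimes\mathcal M^{-i})=0$ for all $i>0$, then $\mathcal F$ is generated by its global sections. Applying this with $\mathcal F=f_*\omega_X^m\otimes\mathcal L^l$ and $\mathcal M=\mathcal L^j$, and using $\dim Y=n$ to cut off cohomology above degree $n$, the theorem reduces to the vanishing
\[
H^i\bigl(Y,\,f_*\omega_X^m\otimes\mathcal L^{\,l-ij}\bigr)=0\qquad(1\le i\le n).
\]

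The heart of the matter is therefore a Kollár--Popa--Schnell-type vanishing theorem for direct images of pluricanonical bundles twisted by an ample line bundle. The clean form I would aim for is
\[
H^i\bigl(Y,\,f_*\omega_X^m\otimes\mathcal L^{k}\bigr)=0\qquad\bigl(i>0,\ k\ge (m-1)ij+m\bigr),
\]
which for $m=1$ is exactly Kollár vanishing (threshold $1$, independent of $j$) and for $j=1$ recovers the vanishing underlying Theorem~\ref{thm:PS}. To prove it I would rerun the argument behind Theorem~\ref{thm:PS}, in which general members of the free system $|\mathcal L|$, pulled back to $X$, are used both to cut and to manufacture the boundary that endows the non-canonical part of $\omega_X^m=\omega_X\otimes\omega_X^{m-1}$ with a klt structure having nef-and-big positive part, whence a Kawamata--Viehweg/Nadel vanishing applies and Kollár's theorem on higher direct images transports it down to $Y$. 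The one substitution is that, since only $|\mathcal L^j|$ is free here, I replace each general member of $|f^*\mathcal L|$ by a general member of $|f^*\mathcal L^j|$; transporting the positivity estimates through this substitution is what inflates the ``cutting'' part of the threshold by the factor $j$ while leaving the single canonical twist to contribute the additive $m$.

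Granting this, the conclusion is immediate and uniform in $l$. For every $l\ge m(jn+1)$ and every $1\le i\le n$ one has $l-ij-\bigl[(m-1)ij+m\bigr]=mj(n-i)\ge0$, so the twist $l-ij$ always meets the threshold; hence $f_*\omega_X^m\otimes\mathcal L^l$ is $0$-regular with respect to $\mathcal M=\mathcal L^j$ and is globally generated. In particular no separate interpolation step is needed: the regularity argument covers all admissible $l$ at once, the freeness of $\mathcal L^j$ being absorbed entirely into the choice of comparison bundle, and the binding constraint $i=n$ producing exactly the bound $l\ge m(jn+1)$.

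The main obstacle I anticipate is the vanishing theorem with \emph{sharp} constants. A crude vanishing is easy but lands the constant in the wrong place: the black-box substitution $\mathcal L\mapsto\mathcal L^j$ directly in Theorem~\ref{thm:PS} yields only $l\ge jm(n+1)=mjn+mj$, which overshoots the target $m(jn+1)=mjn+m$ by $m(j-1)$. The delicate point is thus the coefficient bookkeeping on the log resolution: one must verify that non-freeness multiplies only the $(m-1)$-fold cutting positivity (turning $n$ into $jn$) while the remaining canonical twist still contributes a bare additive $m$, so that the threshold assembles to precisely $(m-1)ij+m$ and not something larger. Checking that the klt and nef-big estimates close up with these exact coefficients is where the real work lies.
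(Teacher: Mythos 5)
Your overall architecture --- deducing global generation from $0$-regularity with respect to the free ample bundle $\mathcal L^j$, which reduces everything to a vanishing statement for $f_*\omega_X^m\otimes\mathcal L^{l-ij}$ --- is exactly the frame of the paper's proof, and your numerology at the end ($l-ij\ge(m-1)ij+m$ whenever $l\ge m(jn+1)$ and $1\le i\le n$) checks out. The gap is that the vanishing theorem carrying all the content, $H^i(Y,f_*\omega_X^m\otimes\mathcal L^{k})=0$ for $k\ge(m-1)ij+m$, is left unproven, and the route you sketch for it would not produce that constant. You propose to manufacture the klt boundary from general members of $|f^*\mathcal L^j|$, so that ``non-freeness multiplies the cutting positivity.'' But in the Popa--Schnell argument, and in the paper, the boundary divisor is \emph{not} cut out of $|f^*\mathcal L|$ or $|f^*\mathcal L^j|$ at all: one lets $s$ be the smallest integer such that $f_*\omega_X^m\otimes\mathcal L^s$ is globally generated (which exists because $\mathcal L$ is ample, free or not), passes to a log resolution so that the image $\mathcal M$ of $f^*f_*\omega_X^m\to\omega_X^m$ is $\omega_X^m(-E)$, and chooses a general $D\in|\mathcal M\otimes f^*\mathcal L^{s}|$. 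This yields $mK_X+sf^*L\sim D+E$ and, after the coefficient bookkeeping producing $E'$ and $\Delta'$, a vanishing $H^i(Y,f_*\omega_X^m\otimes\mathcal L^{l})=0$ for all $i>0$ and $l>\frac{m-1}{m}s$ --- a threshold depending on the a priori unknown $s$, not a fixed one.

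The missing idea is therefore the bootstrap that converts this $s$-dependent vanishing into your fixed threshold. The freeness of $|\mathcal L^j|$ enters only at the regularity step: $0$-regularity with respect to $\mathcal L^j$ holds once $l>\frac{m-1}{m}s+jn$, so by minimality of $s$ one gets the recursive inequality $s\le\frac{m-1}{m}s+jn+1$, which solves to $s\le m(jn+1)$; only \emph{then} does the vanishing hold for $l\ge(m-1)jn+m$, which is your threshold at $i=n$ and suffices for all $i\le n$. So the ``separate interpolation step'' you say is not needed is precisely the step that is needed: without it there is no direct proof of your vanishing statement, and the delicate coefficient bookkeeping you flag cannot be closed by substituting $\mathcal L^j$ for $\mathcal L$ in the boundary construction, because $j$ never appears there.
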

When $j=1$, i.e. $|\mathcal L|$ is free, 
then the above theorem is equivalent to Theorem~\ref{thm:PS}. 
When $j\ge 2$, Theorem~\ref{thm:main_0} cannot be concluded simply 
from Theorem~\ref{thm:PS}. 
Indeed, applying Theorem~\ref{thm:PS} 
in the situation of Theorem~\ref{thm:main_0}, we get that the sheaf 
$$
f_*\omega_X^m \otimes \mathcal L^{jl}
$$
is generated by its global sections for $l \ge m(n+1)$, 
but $jm(n+1)=jmn +jm > jmn + m =m(jn+1)$. 
 
We furthermore treat the case of positive characteristic. 
Note that there exist counterexamples to Conjecture~\ref{conj:PS} and Theorem~\ref{thm:PS} in positive characteristic (cf. \cite{GZZ22, SZ20}). 
The following is an analog of Theorem~\ref{thm:PS} in positive characteristic: 
\begin{thm}[\textup{\cite[Theorem~1.5]{Eji19d}}] \label{thm:PS-p}
Let the base field be an algebraically closed field of positive characteristic. 
Let $f:X\to Y$ be a surjective morphism from a smooth projective variety $X$ 
to a projective variety $Y$ of dimension $n$. 
Let $\mathcal L$ be an ample line bundle on $Y$ with $|\mathcal L|$ free. 
Suppose that $\omega_X$ is $f$-ample. 
Then there exists an integer $m_0\ge 1$ such that the sheaf
$$
f_*\omega_X^m \otimes \mathcal L^l
$$
is generated by its global sections for each $m\ge m_0$ and $l\ge m(n+1)$. 
\end{thm}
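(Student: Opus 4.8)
The plan is to run the positive-characteristic global-generation machine, in which the Frobenius trace map takes over the role played in characteristic zero by the Kodaira--Kollár-type vanishing theorems that are genuinely false here. Since global generation of a coherent sheaf is equivalent to surjectivity of all the fiberwise evaluation maps, the goal is to make $H^0\bigl(Y, f_*\omega_X^m\otimes\mathcal L^l\bigr)\to \bigl(f_*\omega_X^m\otimes\mathcal L^l\bigr)\otimes k(y)$ surjective for every closed point $y\in Y$. Because one cannot lift sections through ample twists via vanishing, I would not work with the full space of sections but with its Frobenius-stable part. Concretely, iterating the Grothendieck-duality trace $F^e_*\omega_X^{p^e}\to\omega_X$ and twisting it suitably so that it lands in $\omega_X^m$ produces a decreasing system of subsheaves of $f_*\omega_X^m\otimes\mathcal L^l$; I denote its stable image by $S^0f_*\bigl(\omega_X^m\otimes f^*\mathcal L^l\bigr)$, and by the projection formula it is a subsheaf of $f_*\omega_X^m\otimes\mathcal L^l$. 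The whole proof then splits into two assertions: \textbf{(A)} for $m\ge m_0$ this stable subsheaf is everything, i.e.\ $S^0f_*\omega_X^m=f_*\omega_X^m$; and \textbf{(B)} the stable subsheaf is already globally generated once $l\ge m(n+1)$. Combining the two yields the theorem.

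The first step, assertion (A), is where the hypothesis that $\omega_X$ is $f$-ample enters, and I expect it to be the main obstacle. Fiberwise, (A) is the statement that the Frobenius-stable canonical ring of a fiber $X_y$ coincides with its honest canonical ring in all sufficiently high degrees: one needs the iterated trace acting on $H^0\bigl(X_y,\omega_{X_y}^m\bigr)$ to be surjective for $m\gg 0$. This is exactly where the positive-characteristic counterexamples to the unconditional statement live, so some positivity must be paid in; $f$-ampleness supplies it, since it makes $\omega_{X_y}^m$ very positive and, crucially, lets one choose (for $m$ large) a relative boundary $\Delta$ with $K_X+\Delta\sim_{\mathbb Q}mK_X$ needed to set up the trace on $\omega_X^m$. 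The difficulty is to make the threshold uniform over the base: I would argue by flat base change and the constructibility of the loci where the stabilized trace has a given image, using that $f$-ampleness holds on every fiber, to extract a single $m_0$ for which the sheaf-level equality $S^0f_*\omega_X^m=f_*\omega_X^m$ holds for all $m\ge m_0$. Controlling this stabilization uniformly in $y$ is the delicate part.

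For assertion (B), which supplies the numerical bound, I would exploit the freeness of $|\mathcal L|$ as the positive-characteristic substitute for Castelnuovo--Mumford regularity. Fix $y\in Y$ and, since $\mathcal L$ is free and $\dim Y=n$, choose $n$ general members $D_1,\dots,D_n\in|\mathcal L|$ whose intersection is $0$-dimensional and meets $y$ transversally. Pulling the $D_i$ back to $X$ and folding them into the boundary of the trace construction gives, for each $e$, a restriction sequence relating $S^0$ on $X$ to $S^0$ along the preimage of the cut-out scheme, together with an obstruction term that is killed after applying $F^e_*$ and twisting by the \emph{ample} $\mathcal L$ once $e\gg 0$. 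Passing to the stable image then shows that $n$ copies of $\mathcal L$ (one for each $D_i$ used to isolate $y$) plus one further copy to run the lift suffice, and the overall factor $m$ reflects that one is separating the pluricanonical sheaf $\omega_X^m$ rather than $\omega_X$; this is precisely the bound $l\ge m(n+1)$. Assembling these pointwise surjectivities for the globally generated subsheaf $S^0f_*\bigl(\omega_X^m\otimes f^*\mathcal L^l\bigr)$ and invoking (A) to identify it with $f_*\omega_X^m\otimes\mathcal L^l$ completes the argument for all $m\ge m_0$ and $l\ge m(n+1)$.
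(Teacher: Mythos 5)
Your overall architecture --- Frobenius trace maps into $\omega_X^m$, a stable-image subsheaf, assertion (A) that $f$-ampleness makes the stable part everything for $m\ge m_0$ --- runs parallel to the paper's actual proof, which deduces this statement from Theorem~\ref{thm:general-p} (take $\Delta=0$, $i=1$, $j=1$, $V=Y$; $F$-purity is automatic since $X$ is smooth). Your (A) is, in the paper's notation, the surjectivity of morphism~(\ref{mor:3}) for $m\ge m_0$, proved there by relative vanishing as in \cite[Example~3.11]{Eji17}; incidentally, no auxiliary boundary with $K_X+\Delta\sim_{\mathbb Q}mK_X$ is needed to set up the trace on $\omega_X^m$, since dualizing $\mathcal O_X\to {F_X^e}_*\mathcal O_X$ against $\mathcal O_X(mK_X)$ already gives morphism~(\ref{mor:2}). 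The genuine gap is in your (B), exactly at the point where the coefficient $m$ in $l\ge m(n+1)$ must be produced. As you describe it, the mechanism --- isolate $y$ by $n$ general members of $|\mathcal L|$ and spend one further copy of $\mathcal L$ on the Frobenius lift --- consumes $n+1$ copies of $\mathcal L$ \emph{independently of} $m$; the sentence ``the overall factor $m$ reflects that one is separating $\omega_X^m$'' is not an argument, and an $m$-independent bound is actually false. Take $f=\mathrm{id}_{\mathbb P^n}$ and $\mathcal L=\mathcal O_{\mathbb P^n}(1)$: every line bundle is ample relative to an isomorphism, $\mathbb P^n$ is Frobenius split so the stable subsheaf is everything, and $f_*\omega_X^m\otimes\mathcal L^l=\mathcal O_{\mathbb P^n}(l-m(n+1))$ is globally generated only when $l\ge m(n+1)$. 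So the linear-in-$m$ coefficient is sharp and has to be derived; your sketch contains no route to it.

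Two ingredients of the paper's proof are what your (B) is missing. First, the source of the $e$-th trace map is $\mathcal G((m-1)p^e+1)=f_*\mathcal O_X(((m-1)p^e+1)K_X)$, a sheaf of unbounded degree as $e\to\infty$, so no regularity lemma (such as Lemma~\ref{lem:glgen}) applies to it directly; the paper resolves it by a \emph{bounded} object, using relative ampleness a second time to get surjections $S^{q_{e,m}}(\mathcal G(d))\otimes\mathcal G(r_{e,m})\to\mathcal G((m-1)p^e+1)$ over $V$, at the price of $q_{e,m}s_d\approx\frac{(m-1)p^e}{d}s_d$ twists of $\mathcal L$ needed to globally generate the symmetric factor. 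Second, the factor $m$ then emerges from a bootstrap: with $s_m$ the minimal twist making $\mathcal G(m)\otimes\mathcal L^{s_m}$ globally generated, the construction plus Lemma~\ref{lem:glgen} gives $s_m\le\frac{m-1}{d}s_d+n+1$, and specializing to $m=d$ self-improves this to $s_d\le d(n+1)$, whence $l\ge m(n+1)$ suffices. Some such self-referential inequality --- the positive-characteristic analogue of the $\frac{m-1}{m}D$ trick in \cite{PS14} --- is the heart of the matter, and without it your decomposition (A)$+$(B) cannot reach the stated bound. Your (A) is plausible in outline (it is closer in spirit to the fiberwise stabilization of the original proof in \cite{Eji19d}), though the paper's direct sheaf-level surjectivity of (\ref{mor:3}) over $Y$ avoids the base-change and uniformity-in-$y$ issues you correctly flag as delicate.
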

Similarly to the case of characteristic zero, 
we generalize the above theorem: 
\begin{thm}[\textup{Theorem~\ref{thm:general-p}}] \label{thm:main_p}
Let the base field be an algebraically closed field of positive characteristic. 
Let $f:X\to Y$ be a surjective morphism from a smooth projective variety $X$ 
to a projective variety $Y$ of dimension $n$. 
Let $\mathcal L$ be an ample line bundle on $Y$.  
Let $j$ be the smallest positive integer such that $|\mathcal L^j|$ is free. 
Suppose that $\omega_X$ is $f$-ample. 
Then there exists an integer $m_1\ge 1$ such that the sheaf 
$$
f_*\omega_X^m \otimes \mathcal L^l
$$ 
is generated by its global sections for each $m\ge m_1$ and $l\ge m(jn+1)$. 
\end{thm}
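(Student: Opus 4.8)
The plan is to mirror the characteristic-zero argument of Theorem~\ref{thm:general}, replacing the free-case input of Theorem~\ref{thm:PS} by its positive-characteristic analogue Theorem~\ref{thm:PS-p}, and to reduce global generation to an effective vanishing statement by Castelnuovo--Mumford regularity. Set $\mathcal M=\mathcal L^j$, which is ample and \emph{free} by the choice of $j$. Since a sheaf that is $0$-regular with respect to a globally generated line bundle is globally generated, to prove that $f_*\omega_X^m\otimes\mathcal L^l$ is globally generated for $l\ge m(jn+1)$ it suffices to show
\[
H^i\bigl(Y,\ f_*\omega_X^m\otimes\mathcal L^{\,l-ij}\bigr)=0\qquad(1\le i\le n).
\]
All of these twists lie in a single residue class modulo $j$, and the smallest exponent occurs at $i=n$, namely $l-nj\ge m(jn+1)-nj=(m-1)jn+m$. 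Thus everything reduces to the vanishing
\[
H^i\bigl(Y,\ f_*\omega_X^m\otimes\mathcal L^{a}\bigr)=0\qquad\bigl(i>0,\ a\ge (m-1)jn+m\bigr),
\]
for $\mathcal L$ merely ample with $|\mathcal L^j|$ free and $\omega_X$ $f$-ample.

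The engine for this vanishing is the free-case result. First I would isolate, from the proof of Theorem~\ref{thm:PS-p} (equivalently from \cite{Eji19d}), the underlying statement that for a \emph{free} ample bundle $\mathcal M$ on a base of dimension $d$ there is an $m_0$ with $H^i(f_*\omega_X^m\otimes\mathcal M^k)=0$ for all $i>0$, $m\ge m_0$ and $k\ge m(d+1)-d$; this is the vanishing behind the global-generation bound of Theorem~\ref{thm:PS-p}. To pass from the free bundle $\mathcal M=\mathcal L^j$ to the ample bundle $\mathcal L$ in the required residue class, I would argue by induction on $n=\dim Y$, slicing with a general member $D\in|\mathcal L^j|$ and setting $g=f|_{X_D}\colon X_D=f^{-1}(D)\to D$. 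Adjunction gives $\mathcal O_X(X_D)=f^*\mathcal L^j$, hence $\omega_{X_D}^m\cong(\omega_X^m\otimes f^*\mathcal L^{jm})|_{X_D}$, and cohomology and base change for the general member $D$ identifies
\[
\bigl(f_*\omega_X^m\otimes\mathcal L^{a}\bigr)\big|_D\ \cong\ g_*\omega_{X_D}^m\otimes(\mathcal L|_D)^{\,a-jm}.
\]
On $D$ the bundle $\mathcal L|_D$ is again ample with $(\mathcal L|_D)^j$ free, $\omega_{X_D}$ remains $g$-ample, and $\dim D=n-1$. Feeding the long exact sequence attached to $0\to\mathcal L^{-j}\to\mathcal O_Y\to\mathcal O_D\to0$, tensored by $f_*\omega_X^m\otimes\mathcal L^{a}$ (which stays exact as a general $D$ avoids the associated points of the torsion-free sheaf $f_*\omega_X^m$), and descending from Serre vanishing at large twists, the degree-$i$ vanishing on $Y$ at level $a$ follows from the degree-$(i-1)$ vanishing on $D$ at levels $a'\ge a+j$; under the identification above the latter is the inductive hypothesis at level $a'-jm$. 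A direct computation shows the thresholds propagate consistently with $T(n)=(m-1)jn+m$ and $T(n)=T(n-1)+(m-1)j$ (indeed $a'\ge T(n)+j$ corresponds exactly to $a'-jm\ge T(n-1)$), and it is this loss of $j$ per slice, across $n$ slices, that produces the coefficient $jn$.

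The last bookkeeping step is to fix a single integer $m_1$. Each appeal to the free-case engine, for $Y$ itself and for every slice occurring in the induction, requires $m$ to exceed a threshold $m_0$ depending on that fibration, so I would take $m_1$ to be the maximum of the finitely many such thresholds arising from the tower $X\to Y$, $X_D\to D$, $X_{D_1\cap D_2}\to D_1\cap D_2,\ \dots$, down to dimension one.

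I expect the main obstacle to be the positive-characteristic geometry of the slicing. Unlike in characteristic zero, Bertini's smoothness theorem can fail, so one must verify that for a general $D\in|\mathcal L^j|$ the total space $X_D=f^{-1}(D)$ is smooth (or at least that the adjunction formula and the base-change isomorphism above remain valid) and that $\omega_{X_D}$ stays $g$-ample; controlling this simultaneously with the uniform choice of $m_1$, and treating the delicate degree-one ($i=1$) end of the induction, where no lower-degree cohomology on $D$ is available so the genuine positivity furnished by Theorem~\ref{thm:PS-p} must be invoked directly, is the crux of the argument.
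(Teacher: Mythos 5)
Your reduction to Castelnuovo--Mumford $0$-regularity with respect to $\mathcal L^j$ is fine, but the ``engine'' you then invoke does not exist. You propose to extract from the proof of Theorem~\ref{thm:PS-p} an effective vanishing statement $H^i(Y, f_*\omega_X^m\otimes\mathcal M^k)=0$ for $i>0$ and $k\ge m(d+1)-d$, asserting that this is ``the vanishing behind'' that theorem. It is not: in positive characteristic there is no analogue of Koll\'ar or Ambro--Fujino vanishing for $f_*\omega_X^m$ (the very existence of counterexamples to Conjecture~\ref{conj:PS} and Theorem~\ref{thm:PS} in characteristic $p$, cf.\ \cite{GZZ22, SZ20}, already signals that the characteristic-zero vanishing package is unavailable), and the proof of Theorem~\ref{thm:PS-p} does not proceed through any such statement. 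The actual mechanism, reproduced in the paper's Theorem~\ref{thm:general-p}, is entirely different: the Frobenius trace map ${F_X^e}_*\mathcal O_X(((m-1)p^e+1)K_X)\to\omega_X^m$ exhibits $f_*\omega_X^m\otimes\mathcal L^l$ as a quotient of a sheaf of the form ${F_Y^e}_*(\mathcal E(a_eL))$ with $a_e$ growing like $l\,p^e$, and the only vanishing ever used is Serre vanishing applied to $\mathcal E(a_eL)$ for $e\gg0$ (Lemma~\ref{lem:glgen}); no cohomology of $f_*\omega_X^m\otimes\mathcal L^a$ itself is ever controlled. Without your engine the whole inductive slicing scheme has nothing to run on.

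Even granting some vanishing input, the induction has a second unrepaired hole that you partly acknowledge: the case $i=1$. Slicing by $D\in|\mathcal L^j|$ reduces $H^i$ on $Y$ to $H^{i-1}$ on $D$ only for $i\ge2$; for $i=1$ you would need surjectivity of $H^0(Y,f_*\omega_X^m\otimes\mathcal L^a)\to H^0(D,\cdot)$, and you propose to ``invoke Theorem~\ref{thm:PS-p} directly'' --- but that theorem asserts global generation, not an $H^1$ vanishing or a restriction surjectivity, so it cannot close this case at any stage of the induction. (The further issues you flag --- smoothness of $f^{-1}(D)$ for general $D$ in characteristic $p$, base change, and the uniformity of $m_1$ over the tower of slices --- are real but secondary.) To repair the argument you would essentially have to abandon the slicing and work with Frobenius pushforwards as the paper does: there the bound $l\ge m(jn+1)$ comes not from propagating a vanishing threshold across $n$ slices, but from the self-improving inequality $s_m\le\frac{m-1}{d}s_d+jn+1$ for the minimal twists $s_m$ making $f_*\omega_X^m\otimes\mathcal L^{s_m}$ globally generated, combined with $0$-regularity of ${F_Y^e}_*(\mathcal E(a_eL))$ with respect to $\mathcal L^j$.
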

Even when the canonical bundle on $X$ is not $f$-ample, 
if it is ample on a fiber, 
then we can prove a similar result to the above theorem: 
\begin{thm}[\textup{Theorem~\ref{thm:general-p}}] \label{thm:main'_p}
Let the base field be an algebraically closed field of positive characteristic. 
Let $f:X\to Y$ be a surjective morphism from a smooth projective variety $X$ 
to a projective variety $Y$ of dimension $n$. 
Let $\mathcal L$ be an ample line bundle on $Y$.  
Let $j$ be the smallest positive integer such that $|\mathcal L^j|$ is free. 
Suppose that $\omega_{X_\eta}$ is ample, where $X_\eta$ is the generic fiber of $f$. 
Then there exists an integer $m_2\ge 1$ such that the sheaf 
$$ 
f_*\omega_X^m \otimes \mathcal L^l
$$ 
is generically generated by its global sections for each $m\ge m_2$ and $l\ge m(jn+1)$. 
\end{thm}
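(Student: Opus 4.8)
The plan is to establish Theorem~\ref{thm:main'_p} together with Theorem~\ref{thm:main_p} as the two faces of a single statement (Theorem~\ref{thm:general-p}), proved by the Frobenius trace method: the positivity hypothesis enters only through the surjectivity of certain trace maps, and ampleness of $\omega_X$ on \emph{every} fibre makes them surjective over all of $Y$ (hence global generation), while ampleness on the generic fibre alone makes them surjective over a dense open subset (hence only generic generation). Since relative ampleness is an open condition on the base and $Y$ is irreducible, the hypothesis that $\omega_{X_\eta}$ is ample furnishes a dense open $V\subseteq Y$ over which $\omega_X$ is $f$-ample. As the conclusion only asks for surjectivity of the evaluation map on a dense open, it suffices to produce one integer $m_2$ for which generation holds at every point of a fixed dense open contained in $V$.

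The engine I would set up first is the trace map of the $e$-th iterated Frobenius. Writing $q=p^e$ and using Grothendieck duality for the finite morphism $F^e$, the projection formula, and the identity $f\circ F^e_X=F^e_Y\circ f$, one obtains for each $e$ a canonical map of sheaves on $Y$,
\[
\operatorname{Tr}^e\colon F^e_{Y*}\bigl(f_*\omega_X^{\,1+(m-1)q}\bigr)\longrightarrow f_*\omega_X^{\,m}.
\]
The crucial point is that the image of $\operatorname{Tr}^e$ stabilises to a subsheaf agreeing with $f_*\omega_X^m$ over the locus where $\omega_X$ is $f$-ample: there the fibres are pluricanonically ``Frobenius regular'' enough that every section lifts, so $\operatorname{Tr}^e$ is surjective over $V$ for $e\gg0$. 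This is the only step that uses the positivity hypothesis, and it is precisely why the generic-fibre case yields surjectivity, hence generation, only over the dense open $V$.

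To obtain global generation of the source $F^e_{Y*}(f_*\omega_X^{\,1+(m-1)q})\otimes\mathcal L^l$ with the sharp exponent, I would run a Castelnuovo--Mumford regularity argument with respect to the base-point-free system $|\mathcal L^{j}|$. Because cohomology commutes with the finite pushforward $F^e_{Y*}$ and $F^{e*}_Y\mathcal L=\mathcal L^{\,q}$, checking $0$-regularity reduces to the vanishing of
\[
H^i\bigl(X,\ \omega_X\otimes N^{\,q}\bigr),\qquad N=\omega_X^{\,m-1}\otimes f^*\mathcal L^{\,l-ij},\qquad 1\le i\le n,
\]
which Frobenius-amplified Serre vanishing supplies for $e\gg0$ as soon as $N$ is positive over $V$. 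Two contributions to the bound then appear: the free system $|\mathcal L^{j}|$ advances the regularity in units of $\mathcal L^{j}$ across the $n$ dimensions of $Y$, while the requirement that $N$ be positive forces $l-ij$ to grow linearly in $m$; combining these in the worst case $i=n$ yields exactly $l\ge m(jn+1)$.

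The main obstacle I anticipate is not the divisor bookkeeping of the $|\mathcal L^{j}|$ step, which is formal, but the interplay between the Frobenius power $e$ and the twist: the sheaf $f_*\omega_X^{\,1+(m-1)q}$ itself grows with $q$, so the threshold for the Serre-type vanishing must be shown to be governed by the \emph{fixed} positivity of $N$ rather than by $q$, and the locus over which $\operatorname{Tr}^e$ is surjective must be shown to contain a dense open \emph{independent of both $m$ and $e$}. A priori this failure locus could shrink as $m$ grows or as $e\to\infty$, which would give generation only over an ever-smaller open set rather than genuine generic generation by a single $m_2$. Overcoming this requires a Noetherian openness argument on $Y$ together with the boundedness of the fibres furnished by the ampleness of $\omega_{X_\eta}$, showing that the failure locus is a fixed proper closed subset; establishing this uniformity is the delicate heart of the proof, after which the $f$-ample machinery applied over the fixed dense open yields the claim.
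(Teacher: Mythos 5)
Your setup is the right one in outline --- the Frobenius trace map $F^e_{Y*}\bigl(f_*\omega_X^{1+(m-1)p^e}\bigr)\to f_*\omega_X^m$, its surjectivity over a dense open $V$ coming from generic relative ampleness of $\omega_X$ and $F$-purity, and a Castelnuovo--Mumford $0$-regularity test with respect to $|\mathcal L^j|$ at the end --- and you correctly flag the central danger, namely that the source sheaf grows with $e$. But your proposed resolution of that danger does not work, and the sharp bound does not come out of it. Reducing $0$-regularity to the vanishing of $H^i(X,\omega_X\otimes N^{p^e})$ with $N=\omega_X^{m-1}\otimes f^*\mathcal L^{l-ij}$ requires $N$ to be ample (or at least nef and suitably positive) on all of $X$; here $\omega_X$ is only assumed ample on the generic fibre, so $N$ has no global positivity whatsoever, and cohomology on $X$ cannot be killed by positivity ``over $V$.'' Even in the everywhere-$f$-ample case, the threshold for $\omega_X^{m-1}\otimes f^*\mathcal L^{c}$ to be ample is $c\ge c_0(m-1)$ for a constant $c_0$ depending on the geometry of $f$, not on $n$ and $j$, so ``combining these in the worst case $i=n$'' yields $l\ge jn+c_0(m-1)$, not the universal bound $l\ge m(jn+1)$. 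The uniformity you defer to ``a Noetherian openness argument'' is exactly the part that needs an actual mechanism.

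The paper's mechanism, which is absent from your proposal, has two components. First, the $e$-dependent sheaf $\mathcal G((m-1)p^e+1):=f_*\omega_X^{(m-1)p^e+1}$ is decomposed, using the generic surjectivity of the multiplication maps $\mathcal G(d)\otimes\mathcal G(n)\to\mathcal G(d+n)$ (again a consequence of generic relative ampleness), as a quotient over $V$ of $S^{q_{e,m}}(\mathcal G(d))\otimes\mathcal G(r_{e,m})$ with $d$ fixed and $r_{e,m}$ bounded; twisting $S^{q_{e,m}}(\mathcal G(d))$ by $q_{e,m}s_dL$ makes it globally generated, so everything reduces to $F^e_{Y*}\bigl(\mathcal E((lp^e-q_{e,m}s_d)L)\bigr)$ for a \emph{fixed} coherent sheaf $\mathcal E$ on $Y$, to which an elementary Serre-vanishing lemma on $Y$ (Lemma~\ref{lem:glgen}) applies since $(lp^e-q_{e,m}s_d)/p^e\to l-\frac{m-1}{d}s_d$. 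Second, the unknown constant (your $c_0$) is packaged as $s_d$, the minimal twist making $\mathcal G(d)\otimes\mathcal L^{s_d}$ globally generated, and the resulting inequality $s_m\le\frac{m-1}{d}s_d+jn+1$ is specialized at $m=d$ to give $s_d\le d(jn+1)$; this self-referential bootstrap is what produces the uniform bound $m(jn+1)$. Without the symmetric-power decomposition and this bootstrap your argument cannot close.
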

Theorems~\ref{thm:main_p} and~\ref{thm:main'_p} are follows from 
Theorem~\ref{thm:general-p} whose proof is based on 
the idea of the referee of \cite{Eji19d}. 
\begin{ack}
The author withes to express his thanks to Professors Osamu Fujino, Mihnea Popa and Christian Schnell for valuable comments. 
He is grateful to Professor Masataka Iwai for telling him about Remark~\ref{rem:smooth}. 
He also would like to thank the referee of \cite{Eji19d} for allowing to write this note using the idea of the referee of \cite{Eji19d}. 
\end{ack}
\section{Preliminaries} 
In this section, we collect several terminologies 
and definitions that are used in this note. 

Let $k$ be a field.  
We mean by a variety an integral separated scheme of finite type over $k$. 

For a normal variety $X$ 
and a $\mathbb Q$-Weil divisor $\Delta =\sum_i \delta_i\Delta_i$, 
where each $\Delta_i$ is a prime divisor on $X$, 
we denote by $\lfloor \Delta \rfloor$ (resp. $\{\Delta\}$) the Weil divisor 
$\sum_i \lfloor \delta_i \rfloor \Delta_i$ 
(resp. the effective $\mathbb Q$-Weil divisor $\Delta -\lfloor \Delta \rfloor$). 

For a variety $X$ of positive characteristic, $F_X^e:X\to X$ denotes the $e$-times iterated Frobenius morphism of $X$. 
\begin{defn} \label{defn:ggg}
Let $Y$ be a projective variety over a field $k$. 
Let $\mathcal G$ be a coherent sheaf on $Y$. 
\begin{enumerate}
\item 
Let $V$ be an open subset of $Y$. 
We say that $\mathcal G$ is \textit{generated by its global sections on $V$ $($or globally generated on $V$$)$}
if the natural morphism 
$$
H^0(Y,\mathcal G) \otimes_k \mathcal O_Y
\to 
\mathcal G
$$
is surjective on $V$. 
\item 
We say that $\mathcal G$ is \textit{generically generated by its global sections 
$($or generically globally generated$)$} if $\mathcal G$ is 
generated by its global sections on a dense open subset of $Y$.  
\end{enumerate}
\end{defn}
\begin{defn} \label{defn:F-pure}
Let $X$ be a normal variety and let $\Delta$ be an effective $\mathbb Q$-Weil divisor on $X$. 
We say that the pair $(X,\Delta)$ is $F$-pure if the composite of 
$$
\mathcal O_X 
\xrightarrow{{F_X^e}^\sharp} {F_X^e}_*\mathcal O_X
\hookrightarrow {F_X^e}_*\mathcal O_X(\lfloor (p^e-1)\Delta\rfloor)
$$
splits locally as an $\mathcal O_X$-module homomorphism for each $e\ge1$. 
\end{defn}
\section{Proofs of theorems}
In this section, we prove the main theorems. 
First, we consider the case of characteristic zero. 
\begin{thm} \label{thm:general}
Let the base field be an algebraically closed field of characteristic zero. 
Let $X$ be a normal projective variety 
and let $\Delta$ be an effective $\mathbb Q$-Weil divisor on $X$ 
such that $i(K_X+\Delta)$ is Cartier for an integer $i\ge 1$ 
and that $(X,\Delta)$ is log canonical. 
Let $Y$ be a projective variety of dimension $n$ 
and let $f:X\to Y$ be a morphism.
Let $\mathcal L$ be an ample line bundle on $Y$. 
Let $j$ be the smallest positive integer such that $|\mathcal L^j|$ is free. 
Then for each $m\ge 1$ with $i|m$, the sheaf 
$$
f_*\mathcal O_X(m(K_X+\Delta)) \otimes \mathcal L^l
$$
is generated by its global sections for each $l\ge m(jn +1)$. 
\end{thm}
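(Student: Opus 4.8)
The plan is to prove the statement by induction on $n = \dim Y$, establishing global generation directly and using general members of the free system $|\mathcal L^j|$ as the hyperplane sections that drive the induction; the sharp exponent $m(jn+1)$ will emerge because each dimension-reduction step costs exactly one copy of $\mathcal L^j$. Throughout, the divisibility $i \mid m$ guarantees that $m(K_X+\Delta)$ is Cartier, so $\mathcal O_X(m(K_X+\Delta))$ is a genuine line bundle, and I write $\mathcal G := f_*\mathcal O_X(m(K_X+\Delta)) \otimes \mathcal L^l$. We may assume $f$ is dominant, since otherwise one replaces $Y$ by the closure of its image, which only decreases both $n$ and the freeness index and only makes the bound easier. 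When $n=0$ the sheaf $\mathcal G$ is a finite-dimensional vector space, hence globally generated, so I assume $n \ge 1$ and that the theorem holds over every projective base of dimension $<n$, for every ample line bundle together with its own freeness index.

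For the inductive step, by the standard argument for global generation it suffices to generate $\mathcal G$ at a general point $y$ of the (closed) locus where it fails to be generated. Since $|\mathcal L^j|$ is base-point free, I choose a general member $H \in |\mathcal L^j|$ through $y$; by the Bertini-type theorems available in characteristic zero, $H$ is a normal projective variety, $X_H := f^*H$ is again a normal variety with $(X_H, \Delta_H)$ log canonical (where $\Delta_H := \Delta|_{X_H}$), the base-change isomorphism $(f_*\mathcal O_X(m(K_X+\Delta)))|_H \cong (f_H)_*\mathcal O_{X_H}(m(K_X+\Delta)|_{X_H})$ holds, and $i(K_{X_H}+\Delta_H)$ remains Cartier. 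Here $f_H : X_H \to H$ is the restriction of $f$, and $X_H = f^*H$ is a Cartier divisor with $\mathcal O_X(X_H) = f^*\mathcal L^j$, so adjunction gives $K_{X_H}+\Delta_H = (K_X+\Delta)|_{X_H} + X_H|_{X_H}$ and hence
$$
\mathcal O_X(m(K_X+\Delta))|_{X_H} \cong \mathcal O_{X_H}(m(K_{X_H}+\Delta_H)) \otimes f_H^*(\mathcal L|_H)^{-mj}.
$$
Pushing forward and twisting by $(\mathcal L^l)|_H$, I obtain
$$
\mathcal G|_H \cong (f_H)_*\mathcal O_{X_H}(m(K_{X_H}+\Delta_H)) \otimes (\mathcal L|_H)^{\,l-mj}.
$$

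Now $\mathcal L|_H$ is ample, and its freeness index $j_H$ satisfies $j_H \le j$, because the free system $|\mathcal L^j|$ on $Y$ restricts to a base-point-free system on $H$. Thus the induction hypothesis applies to $(X_H,\Delta_H) \to H$ and shows that $\mathcal G|_H$ is globally generated as soon as $l-mj \ge m(j_H(n-1)+1)$. Since $j_H \le j$, it is enough that $l-mj \ge m(j(n-1)+1)$, that is, $l \ge m(jn+1)$, which is exactly the hypothesis; this is the point at which the sharp exponent appears, each step spending a full $\mathcal L^j$ rather than a single $\mathcal L$.

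It remains to deduce global generation of $\mathcal G$ at $y$ from that of $\mathcal G|_H$, and this is the step I expect to be the main obstacle. From the exact sequence
$$
0 \to \mathcal G \otimes \mathcal L^{-j} \to \mathcal G \to \mathcal G|_H \to 0
$$
the restriction map $H^0(Y,\mathcal G) \to H^0(H,\mathcal G|_H)$ is surjective provided
$$
H^1\!\left(Y,\ f_*\mathcal O_X(m(K_X+\Delta)) \otimes \mathcal L^{\,l-j}\right) = 0;
$$
granting this, the global sections of $\mathcal G|_H$ that generate $(\mathcal G|_H)\otimes k(y) = \mathcal G \otimes k(y)$ lift to $Y$, so $\mathcal G$ is generated at $y$, completing the induction. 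For $m=1$ this vanishing is precisely Kollár's theorem in its log canonical (Ambro--Fujino) form, applied to the ample twist $\mathcal L^{l-j}$. For $m \ge 2$, however, it is the effective vanishing for direct images of pluricanonical bundles: the twist $l-j \ge m(jn+1)-j$ lies well beyond the vanishing threshold, but—unlike the $m=1$ case—this is a genuinely characteristic-zero phenomenon resting on the cohomological heart of Theorem~\ref{thm:PS}, extended to the log canonical setting by Fujino's vanishing theorems. I would therefore isolate this vanishing as a separate proposition and prove it by the Popa--Schnell method with $\mathcal L^j$ in the role of their globally generated bundle, taking care that the resulting constants land in the range $l \ge m(jn+1)$ rather than in the lossy range produced by applying Theorem~\ref{thm:PS} to $\mathcal L^j$ directly.
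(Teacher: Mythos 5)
Your proposal reduces the theorem, via hyperplane induction through a general member of $|\mathcal L^j|$, to the vanishing
$$
H^1\bigl(Y,\ f_*\mathcal O_X(m(K_X+\Delta)) \otimes \mathcal L^{\,l-j}\bigr)=0
\quad\text{for } l\ge m(jn+1),
$$
and you explicitly defer the proof of that vanishing to ``the Popa--Schnell method, taking care that the constants land in the right range.'' That deferred step is not a technicality: it is the entire content of the theorem, and it is exactly what the paper's proof establishes. The paper obtains the vanishing by writing $m(K_X+\Delta)-E'+lf^*L \sim_{\mathbb Q} K_X+\Delta'+\frac{m-1}{m}D+(l-\frac{m-1}{m}s)f^*L$ for a suitable $0\le E'\le E$ and a general member $D$ of $|\mathcal M\otimes f^*\mathcal L^s|$, where $s$ is the \emph{smallest} twist making $f_*\mathcal B\otimes\mathcal L^s$ globally generated; Ambro--Fujino vanishing then gives $H^i(Y,f_*\mathcal B\otimes\mathcal L^l)=0$ for all $i>0$ and $l>\frac{m-1}{m}s$. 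But this threshold depends on $s$, and bounding $s$ by $m(jn+1)$ is achieved only by closing a bootstrap: Castelnuovo--Mumford $0$-regularity with respect to $\mathcal L^j$ gives global generation for $l>\frac{m-1}{m}s+jn$, whence $s\le\frac{m-1}{m}s+jn+1$ by minimality. In other words, the only available proof of the vanishing you need already proves global generation outright (regularity with respect to $\mathcal L^j$ costs the $jn$ twists and is where the factor $j$ enters), so your induction on $\dim Y$ is circular in the sense that its key lemma subsumes the theorem; for $m\ge 2$ the vanishing is not an off-the-shelf Koll\'ar/Ambro--Fujino statement and cannot simply be cited.

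Two secondary issues: (i) you need $H$ to pass through a prescribed point $y$ of the non-generated locus, but $|\mathcal L^j|$ is only free, not very ample, so the subsystem of members through $y$ has base locus the whole fiber $\phi^{-1}(\phi(y))$ of the finite morphism defined by $|\mathcal L^j|$; Bertini then does not guarantee that $H$, and hence $X_H=f^*H$, is normal with $(X_H,\Delta_H)$ log canonical near those points, which threatens both the adjunction step and the base-change isomorphism. (ii) The isomorphism $(f_*\mathcal F)|_H\cong (f_H)_*(\mathcal F|_{X_H})$ for general $H$ requires an argument (injectivity of multiplication by the section defining $H$ on $R^1f_*\mathcal F(-H)$), which again interacts badly with the constraint that $H$ contain $y$. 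These could likely be repaired, but the missing vanishing proposition is the essential gap.
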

\begin{proof}
The proof is almost the same as that of \cite[Theorem~1.7]{PS14}, 
and our argument follows \cite[Proof of Theorem~3.2.1]{Fuj20iitaka}. 
 
Fix $m\ge 1$ such that $m(K_X+\Delta)$ is Cartier. 
Put $\mathcal B :=\mathcal O_X(m(K_X+\Delta))$. 
We may assume that $f_*\mathcal B \ne 0$. 
Then the image of the natural morphism 
$
f^*f_*\mathcal B \to \mathcal B
$
is isomorphic to $\mathfrak b \otimes \mathcal B$ 
for an ideal sheaf $\mathfrak b$. 
Taking the log resolution of $\mathfrak b$ and $\Delta$, 
we may assume that $\mathfrak b \cong \mathcal O_X(-E)$ for 
an effective divisor $E$ such that $\mathrm{Supp}(\Delta+E)$ is simple normal crossing. 
Set $\mathcal M:=\mathcal B(-E)$. 
Then we have 
$
f^*f_*\mathcal B 
\overset{\alpha}{\twoheadrightarrow} \mathcal M 
\overset{\beta}{\hookrightarrow} \mathcal B
$ 
and 
$$
\xymatrix{
f_*\mathcal B \ar[r] \ar@/^20pt/[rrr]^{\mathrm{id}} & 
f_*f^*f_*\mathcal B \ar[r]_-{f_*\alpha} & 
f_*\mathcal M \ar@{^(->}[r]_-{f_*\beta} &
f_*\mathcal B, 
}
$$
so $f_*\mathcal M \cong f_*\mathcal B$. 
We show that there is a divisor $0\le E' \le E$ 
such that each coefficient in 
$
\Delta':=\Delta +\frac{m-1}{m}E -E'
$
is in $(0,1]$. 
We can write 
\begin{align*}
\Delta = \sum_{i=1}^\mu d_iD_i, ~d_i\in\mathbb R \cap (0,1]
\quad \textup{and} \quad
E = E_1 +\sum_{i=1}^\mu e_iD_i, ~e_i \in\mathbb Z_{\ge0}, 
\end{align*}
where $\mathrm{Supp}(E_1)$ and $\mathrm{Supp}(\Delta)$ have no common component.
Put  
$$
E':=\left\lfloor \frac{m-1}{m}E_1 \right\rfloor 
+\sum_{i=1}^\mu \left\lceil d_i + \frac{m-1}{m}e_i -1 \right\rceil D_i. 
$$
Then one can check that $E'$ is what we wanted. 
Let $s$ be the smallest integer such that 
$f_*\mathcal B \otimes \mathcal L^s$ is globally generated. 
Then, we see from the morphism 
$$
f^*f_*\mathcal B \otimes f^* \mathcal L^{s}
\twoheadrightarrow \mathcal M \otimes f^*\mathcal L^{s}
$$
that 
$
\mathcal M \otimes f^*\mathcal L^{s}
$
is globally generated. 
Then there is a smooth member $D\in |\mathcal M \otimes f^*\mathcal L^{s}|$ 
such that $\mathrm{Supp}(D)$ and $\mathrm{Supp}(\Delta')$ 
have no common components and 
$\mathrm{Supp}(\Delta' +D)$ is simple normal crossing. 
Let $L$ be a Cartier divisor on $Y$ such that $\mathcal O_Y(L)\cong\mathcal L$. 
We now have 
$$
m(K_X+\Delta) +s f^*L \sim D +E, 
$$
from which we obtain 
$$
(m-1)(K_X+\Delta) 
\sim_{\mathbb Q} \frac{m-1}{m}D +\frac{m-1}{m}E -\frac{(m-1)}{m}sf^*L, 
$$
and so we get 
\begin{align*}
& m(K_X+\Delta) -E' +lf^*L 
\\ \sim_{\mathbb Q} & K_X +\Delta -E' +\frac{m-1}{m}D +\frac{m-1}{m}E
+\left( l -\frac{m-1}{m} s \right) f^*L 
\\ = & K_X +\Delta' +\frac{m-1}{m}D 
+\left( l -\frac{m-1}{m} s \right) f^*L 
\end{align*}
for an $l \ge 1$. 
Then it follows from the projection formula and the vanishing theorem 
(\cite[Theorem~3.2]{Amb03} or \cite[Theorem~6.3]{Fuj11}), 
that 
$$
H^i\left(Y, f_*\mathcal B \otimes \mathcal L^l \right) =0
$$
for each $i>0$ and $l >\frac{m-1}{m}s$. 
Note that since $0 \le E' \le E$ we have 
$$
f_*\mathcal M
\overset{\cong}{\hookrightarrow}
f_*\mathcal O_X( m(K_X+\Delta) -E' )
\overset{\cong}{\hookrightarrow}
f_*\mathcal B. 
$$
Hence, if $l>\frac{m-1}{m}s +jn$, then 
$
f_*\mathcal B \otimes \mathcal L^l
$
is $0$-regular with respect to $\mathcal L^j$ 
in the sense of Castelnuovo--Mumford, 
and so it is globally generated (\cite[Theorem~1.8.3]{Laz04I}). 
By the definition of $s$, we obtain
$$
s \le \frac{m-1}{m} s +jn +1, 
$$
and thus $s \le m(jn +1)$. 
Therefore, if $l\ge m(jn +1)$, then 
$$
\frac{m-1}{m}s +jn +1 
\le \frac{m-1}{m}m(jn+1) +jn +1 
= m(jn+1) 
\le l, 
$$
so $f_*\mathcal B \otimes \mathcal L^l$ is globally generated, 
which is our claim. 
\end{proof}
\begin{rem} \label{rem:smooth}
The author learned from Professor Masataka Iwai that 
if $X$ and $Y$ are smooth and if $f:X\to Y$ is smooth, 
then for every ample line bundle $\mathcal L$ on $Y$,
$$
f_*\omega_X^m \otimes \mathcal L^l
$$
is generated by its global sections for each $l\ge m(n+1) +n(j-1)$, 
where $j$ is the smallest positive integer such that $|\mathcal L^j|$ is free. 
The proof is based on an analytic method that is similar to that of \cite{Iwa17}, 
which uses a singular Hermitian metric constructed in \cite[Section 11]{AS95}. 
\end{rem}
Next, we treat the case of positive characteristic. 
\begin{thm} \label{thm:general-p}
Let the base field be an algebraically closed field of characteristic $p>0$. 
Let $X$ be a normal projective variety and let $\Delta$ be an effective $\mathbb Q$-Weil divisor on $X$ such that $i(K_X+\Delta)$ is Cartier for an integer $i>0$ not divisible by $p$. 
Let $Y$ be a projective variety of dimension $n$ 
and let $f:X\to Y$ be a morphism.
Suppose that there exists a dense open subset $V\subseteq Y$ such that 
\begin{itemize}
\item $(U,\Delta|_U)$ is $F$-pure, where $U:=f^{-1}(V)$, and 
\item $K_U+\Delta|_U$ is ample over $V$. 
\end{itemize}
Let $\mathcal L$ be an ample line bundle on $Y$ 
and let $j$ be the smallest positive integer such that $|\mathcal L^j|$ is free. 
Then there exists an integer $m_0\ge 1$ such that the sheaf 
$$
f_*\mathcal O_X(m(K_X+\Delta)) \otimes \mathcal L^l
$$
is generated by its global sections for each $m \ge m_0$ with $i|m$ 
and each $l\ge m(jn +1)$. 
\end{thm}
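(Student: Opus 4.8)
The plan is to follow the two-step pattern of the proof of Theorem~\ref{thm:general} — reduce global generation to a cohomology vanishing by Castelnuovo--Mumford regularity with respect to the free ample bundle $\mathcal L^{j}$, and then prove that vanishing — but to replace the two ingredients that are unavailable in characteristic $p$, namely log resolution and Kawamata--Viehweg vanishing, by a Frobenius argument driven by the $F$-purity of $(U,\Delta|_U)$. Write $\mathcal B:=\mathcal O_X(m(K_X+\Delta))$, assume $f_*\mathcal B\neq0$, and fix a Cartier divisor $L$ with $\mathcal O_Y(L)\cong\mathcal L$. Shrinking $V$ if necessary, I may assume in addition that $f$ is flat over $V$ with geometrically nice fibres; this does not affect the two bullet-point hypotheses. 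Let $s$ be the smallest integer such that $f_*\mathcal B\otimes\mathcal L^{s}$ is globally generated.

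Since $\mathcal L^{j}$ is ample with $|\mathcal L^{j}|$ free, \cite[Theorem~1.8.3]{Laz04I} reduces the theorem to the statement that $f_*\mathcal B\otimes\mathcal L^{l}$ is $0$-regular with respect to $\mathcal L^{j}$, that is, to
$$H^{i}\bigl(Y,\;f_*\mathcal B\otimes\mathcal L^{\,l-ij}\bigr)=0\qquad(1\le i\le n).$$
Exactly as in Theorem~\ref{thm:general}, it therefore suffices to prove
$$H^{i}\bigl(Y,\;f_*\mathcal B\otimes\mathcal L^{\,l'}\bigr)=0\qquad\Bigl(i>0,\ l'>\tfrac{m-1}{m}s\Bigr);$$
granting this, the definition of $s$ forces $s\le\tfrac{m-1}{m}s+jn+1$, hence $s\le m(jn+1)$, and then $f_*\mathcal B\otimes\mathcal L^{l}$ is globally generated for every $l\ge m(jn+1)$, which is the assertion.

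To produce this vanishing without resolving $X$, I would use the global pluricanonical trace map supplied by Grothendieck duality (up to the correct rounding of $\Delta$),
$$\Phi_{e}\colon (F_X^{e})_*\,\mathcal M_{e}\longrightarrow\mathcal B,\qquad \mathcal M_{e}:=\mathcal O_X\!\bigl(p^{e}m(K_X+\Delta)+\lfloor(p^{e}-1)\Delta\rfloor\bigr),$$
whose surjectivity over $U$ is exactly the $F$-purity of $(U,\Delta|_U)$ of Definition~\ref{defn:F-pure} (dualize the splitting there and twist by $\mathcal B$). Applying $f_*$, using $f\circ F_X^{e}=F_Y^{e}\circ f$, twisting by $\mathcal L^{l'}$ and invoking the projection formula turns the $\mathcal L^{l'}$-twist on the source into an $\mathcal L^{p^{e}l'}$-twist; since $F_Y^{e}$ is affine, on cohomology one obtains a map $H^{i}(Y,f_*\mathcal M_{e}\otimes\mathcal L^{p^{e}l'})\to H^{i}(Y,f_*\mathcal B\otimes\mathcal L^{l'})$ built from $\Phi_{e}$, hence surjective over $V$. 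A diagram chase through the kernel and cokernel of this trace then bounds $H^{i}(Y,f_*\mathcal B\otimes\mathcal L^{l'})$ by two pieces: a main term $H^{i}(Y,f_*\mathcal M_{e}\otimes\mathcal L^{p^{e}l'})$ and an error term built from sheaves supported on $Y\setminus V$. For the main term, relative ampleness of $K_X+\Delta$ over $V$ makes $m(K_X+\Delta)+l'f^{*}L$ ample on $U$ once $l'>\tfrac{m-1}{m}s$, and relative-Frobenius base change over $V$ identifies $f_*\mathcal M_{e}\otimes\mathcal L^{p^{e}l'}$, up to a bounded-in-$e$ correction, with $(F_Y^{e})^{*}\bigl(f_*\mathcal B\otimes\mathcal L^{l'}\bigr)$, so its higher cohomology vanishes for $e\gg0$ by Fujita's vanishing; for the error term, the ampleness of $\mathcal L$ on all of $Y$ — and thus on the proper closed set $Y\setminus V$ — kills the contribution after twisting by $\mathcal L^{p^{e}l'}$ for $e\gg0$. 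This last point, global rather than merely relative ampleness of $\mathcal L$, is what upgrades generation over $V$ to generation on all of $Y$.

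The main obstacle is to make this vanishing simultaneously global and sharp. Two coupled difficulties arise. First, both $F$-purity and relative ampleness hold only over $V$, so $\Phi_{e}$ is surjective only there; promoting the resulting vanishing from $V$ to global cohomology on $Y$ is precisely the error analysis over $Y\setminus V$, and requires choosing $e$ large while keeping the threshold fixed at $\tfrac{m-1}{m}s$. Second, the family $\{f_*\mathcal M_{e}\}_{e}$ is unbounded, so Fujita's vanishing applies to the main term only after relative ampleness and the hypothesis $m\ge m_{0}$ are used to render the relevant family effectively bounded and the Frobenius descent uniform in $e$; this is where $m_{0}$ is spent, and carrying it out without invoking resolution of singularities — entirely through the $F$-purity trace and relative-Frobenius base change, following the referee's idea recorded in \cite{Eji19d} — is the technical heart of the argument.
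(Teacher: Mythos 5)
Your overall plan---reduce to the vanishing $H^i(Y, f_*\mathcal B\otimes\mathcal L^{l'})=0$ for all $i>0$ and $l'>\frac{m-1}{m}s$, verify $0$-regularity of $f_*\mathcal B\otimes\mathcal L^{l}$ itself with respect to $\mathcal L^{j}$, and prove that vanishing by a Frobenius trace argument---is not the paper's route, and the key step would fail. In characteristic $p$ there is no Koll\'ar/Ambro--Fujino-type vanishing for $f_*\mathcal O_X(m(K_X+\Delta))$; avoiding any such vanishing is precisely the point of the paper's proof. Your proposed Frobenius substitute breaks at each joint: surjectivity of the trace $\Phi_e$ holds only over $U$, so the induced sheaf map is surjective only on $V$ and gives no surjection on $H^i$ without controlling a kernel and cokernel which vary with $e$ through the unbounded family $f_*\mathcal M_e$, so neither Serre nor Fujita vanishing applies to the ``error term'' uniformly in $e$; the asserted identification of $f_*\mathcal M_e\otimes\mathcal L^{p^el'}$ with $(F_Y^e)^{*}(f_*\mathcal B\otimes\mathcal L^{l'})$ up to a bounded-in-$e$ correction is false (relative-Frobenius base change introduces twists by $(1-p^e)$-multiples of canonical-type divisors, unbounded in $e$); and even granting it, higher cohomology of a Frobenius pullback is not controlled by Fujita vanishing. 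A smaller slip: Grothendieck duality applied to $\mathcal O_X\to F^e_{X*}\mathcal O_X(\lfloor(p^e-1)\Delta\rfloor)$ against $m(K_X+\Delta)$ gives source $F^e_{X*}\mathcal O_X\bigl(((m-1)p^e+1)(K_X+\Delta)\bigr)$, not your $\mathcal M_e$.

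The missing idea---the actual technical heart of the paper---is a boundedness mechanism that lets one avoid proving any vanishing for $\mathcal G(m)\otimes\mathcal L^{l}$: using relative ampleness of $K_X+\Delta$ over $V$, fix $d$ and $n_0$ so that the multiplication maps $\mathcal G(d)\otimes\mathcal G(n)\to\mathcal G(d+n)$ are surjective on $V$ for $n\ge n_0$; write $(m-1)p^e+1=q_{e,m}d+r_{e,m}$ with $n_0\le r_{e,m}<n_0+d$, so that $S^{q_{e,m}}(\mathcal G(d))\otimes\mathcal G(r_{e,m})$ dominates $\mathcal G((m-1)p^e+1)$ on $V$; then replace the globally generated factor $S^{q_{e,m}}(\mathcal G(d))(q_{e,m}s_dL)$ by $\bigoplus\mathcal O_Y$, leaving a Frobenius pushforward of the \emph{fixed} sheaf $\mathcal E=\bigoplus_{n_0\le r<n_0+d,\ i|r}\mathcal G(r)$ twisted by $(lp^e-q_{e,m}s_d)L$. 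Castelnuovo--Mumford regularity is applied not to $\mathcal G(m)(lL)$ but to $F^e_{Y*}(\mathcal E(a_eL))$ in Lemma~\ref{lem:glgen}, where the projection formula converts the twists by $-ijL$ into $-p^eijL$ inside $F^e_{Y*}$ and plain Serre vanishing for the fixed $\mathcal E$ suffices since $a_e/p^e\to\varepsilon+jn$. This also dictates the shape of the recursion, $s_m\le\frac{m-1}{d}s_d+jn+1$ with the fixed $d$ (then set $m=d$ to get $s_d\le d(jn+1)$), rather than your $s\le\frac{m-1}{m}s+jn+1$ with the same $m$ on both sides, because the Frobenius exponent $(m-1)p^e+1$ must be expanded in a fixed degree $d$. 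Finally, note that the paper's argument (and all that Theorem~\ref{thm:main'_p} extracts from Theorem~\ref{thm:general-p}) yields generation on $V$ only; your proposed upgrade to generation on all of $Y$ via ampleness of $\mathcal L$ on $Y\setminus V$ cannot work, since the construction produces no sections generating $\mathcal G(m)(lL)$ at points where the trace fails to be surjective.
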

To prove the theorem, we need the following lemma:
\begin{lem} \label{lem:glgen}
Let the base field be a field of characteristic $p>0$. 
Let $Y$ be a projective variety of dimension $n$ 
and let $\mathcal E$ be a coherent sheaf on $Y$. 
Let $L$ be an ample Cartier divisor on $Y$ 
and let $j$ be a positive integer such that $|jL|$ is free. 
Let $\{a_e\}_{e\ge1}$ be a sequence of positive integers such that 
$a_e/p^e$ converses to $\varepsilon +jn$ 
for an $\varepsilon \in \mathbb R_{>0}$. 
Then there exists an $e_0\ge 1$ such that 
$$
{F_Y^e}_*(\mathcal E(a_eL))
$$
is generated by its global sections for each $e\ge e_0$. 
\end{lem}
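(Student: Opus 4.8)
The plan is to prove global generation of ${F_Y^e}_*(\mathcal E(a_eL))$ for large $e$ by showing that this sheaf becomes $0$-regular with respect to the globally generated line bundle $\mathcal L^j = \mathcal O_Y(jL)$ in the sense of Castelnuovo--Mumford, and then invoking \cite[Theorem~1.8.3]{Laz04I} exactly as in the proof of Theorem~\ref{thm:general}. Recall that a coherent sheaf $\mathcal G$ is $0$-regular with respect to $jL$ if $H^i(Y, \mathcal G \otimes \mathcal O_Y(-ijL)) = 0$ for all $i > 0$, and that $0$-regularity (with respect to a free ample bundle) implies global generation. Since $\dim Y = n$, the vanishing only needs to be checked for $1 \le i \le n$, which is why the value $jn$ appears in the threshold $\varepsilon + jn$.

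\medskip

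The key observation is that cohomology over $Y$ can be computed upstairs via the Frobenius. Because $F_Y^e$ is a finite (hence affine) morphism, the projection formula gives $H^i\bigl(Y, {F_Y^e}_*(\mathcal E(a_eL)) \otimes \mathcal O_Y(-ijL)\bigr) \cong H^i\bigl(Y, {F_Y^e}_*(\mathcal E(a_eL)) \otimes \mathcal O_Y(-ijL)\bigr)$, and the crucial point is that pulling the twist by $-ijL$ through the pushforward and using ${F_Y^e}^*\mathcal O_Y(L) \cong \mathcal O_Y(p^eL)$ lets us rewrite this as
\begin{align*}
H^i\bigl(Y, {F_Y^e}_*(\mathcal E(a_eL)) \otimes \mathcal O_Y(-ijL)\bigr)
\cong
H^i\bigl(Y, {F_Y^e}_*\bigl(\mathcal E((a_e - ijp^e)L)\bigr)\bigr)
\cong
H^i\bigl(Y, \mathcal E((a_e - ijp^e)L)\bigr),
\end{align*}
where the last isomorphism again uses that $F_Y^e$ is affine so that ${F_Y^e}_*$ is exact and preserves cohomology. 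Thus I reduce the whole problem to a twisted vanishing statement on $Y$ itself for the fixed sheaf $\mathcal E$, namely to showing $H^i(Y, \mathcal E((a_e - ijp^e)L)) = 0$ for $1 \le i \le n$ and all large $e$.

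\medskip

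To establish that vanishing I would apply Fujita/Serre-type asymptotic vanishing. The hypothesis $a_e/p^e \to \varepsilon + jn$ with $\varepsilon > 0$ means that for each $i$ in the relevant range $1 \le i \le n$ the twisting coefficient satisfies
\begin{align*}
\frac{a_e - ijp^e}{p^e} = \frac{a_e}{p^e} - ij \longrightarrow (\varepsilon + jn) - ij = \varepsilon + j(n - i) \ge \varepsilon > 0,
\end{align*}
so the coefficient $a_e - ijp^e$ of the ample divisor $L$ tends to $+\infty$ as $e \to \infty$, uniformly over the finite set $i \in \{1,\dots,n\}$. By Serre vanishing applied to the fixed coherent sheaf $\mathcal E$ and the ample line bundle $\mathcal O_Y(L)$, there is a bound $N$ such that $H^i(Y, \mathcal E(tL)) = 0$ for all $i > 0$ and all $t \ge N$; choosing $e_0$ so that $a_e - njp^e \ge N$ for $e \ge e_0$ (which is possible since the minimum over $i$ of $a_e - ijp^e$ is $a_e - njp^e \to +\infty$) delivers the required vanishing for every $1 \le i \le n$ simultaneously.

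\medskip

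The main obstacle, and the step deserving the most care, is the Frobenius manipulation in the second paragraph: one must be scrupulous that the twist $\mathcal O_Y(-ijL)$ is being pulled back correctly through ${F_Y^e}$ (using $|\mathcal L^j|$ free only to guarantee $0$-regularity suffices for global generation, and using $\mathcal O_Y(-ijL) \cong {F_Y^e}^*\mathcal O_Y(-ijp^{-e}L)$ makes sense only after writing the twist as a pullback, so the integrality of the exponents must be tracked). Once the cohomology upstairs is honestly identified with $H^i(Y, \mathcal E((a_e - ijp^e)L))$, the remainder is routine Serre vanishing; the only subtlety is that the finitely many values $i = 1, \dots, n$ must all be handled by a single $e_0$, which is immediate since the worst case $i = n$ still has coefficient tending to $+\infty$.
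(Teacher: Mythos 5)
Your proposal is correct and follows essentially the same route as the paper: reduce to $0$-regularity with respect to $jL$, use the projection formula and ${F_Y^e}^*\mathcal O_Y(L)\cong\mathcal O_Y(p^eL)$ to identify the relevant cohomology with $H^i(Y,\mathcal E((a_e-ijp^e)L))$, and finish with Serre vanishing since $a_e-ijp^e\to+\infty$ uniformly for $1\le i\le n$. (The first displayed "isomorphism" in your second paragraph is vacuous as written, but the substantive chain that follows is exactly the paper's argument.)
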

\begin{proof}
We prove that ${F_Y^e}_*(\mathcal E(a_eL))$ is $0$-regular 
with respect to $jL$ in the sense of Castelnuovo--Mumford. 
If this holds, then the assertion follows from \cite[Theorem~1.8.3]{Laz04I}. 
For $0<i \le n$, we have 
\begin{align*}
H^i\big(Y, \big({F_Y^e}_*(\mathcal E(a_eL))\big)(-ijL) \big)
& \cong H^i\big(Y, {F_Y^e}_*\big(\mathcal E((a_e-p^eij)L)\big) \big)
\\ & \cong H^i\big(Y, \mathcal E((a_e-p^eij)L) \big) =:V_{e,i} 
\end{align*}
by the projection formula. 
Since $L$ is ample, there is an $m_0\ge 1$ such that 
$$
H^i(Y, \mathcal E(mL)) =0
$$ 
for each $m\ge m_0$ and $i>0$. 
By the assumption, there is an $e_0\ge 1$ such that 
$a_e-p^eij\ge m_0$ for each $e\ge e_0$, 
so $V_{e,i}=0$ for each $e\ge e_0$. 
\end{proof} 
\begin{proof}[Proof of Theorem~\ref{thm:general-p}]
The proof is based on the idea of the referee of \cite{Eji19d}. 
Let $e_0\ge 1$ be an integer such that $i|(p^{e_0}-1)$. 
For $m\ge1$ with $i|m$ and $e\ge 1$ with $e_0|e$, 
applying $\mathcal Hom_{\mathcal O_X}(?, m(K_X+\Delta))$ to the composite of 
\begin{align} \label{mor:1}
\mathcal O_X 
\xrightarrow{{F_X^e}^\sharp} {F_X^e}_*\mathcal O_X
\hookrightarrow {F_X^e}_*\mathcal O_X((p^e-1)\Delta), 
\end{align}
we obtain the morphism 
\begin{align} \label{mor:2}
{F_X^e}_*\mathcal O_X\big(((m-1)p^e+1)(K_X+\Delta) \big) 
\to \mathcal O_X(m(K_X+\Delta))
\end{align}
by the Grothendieck duality. 
This is surjective on $U$ by the $F$-purity of $(U,\Delta|_U)$. 
Set $\mathcal G(m) :=f_*\mathcal O_X(m(K_X+\Delta))$ 
for each $m\ge 1$ with $i|m$. 
Pushing morphism~(\ref{mor:2}) forward by $f$, we get 
\begin{align} \label{mor:3}
{F_Y^e}_*\mathcal G((m-1)p^e+1)
\to \mathcal G(m). 
\end{align}
There is an $m_0\ge 1$ such that the above morphism is surjective on $V$ 
for each $m\ge m_0$ with $i|m$, since $(K_X+\Delta)|_U$ is ample over $V$ 
(this is proved by the same argument as that of \cite[Example~3.11]{Eji17}).
Also, since $(K_X+\Delta)|_U$ is ample over $V$, 
there is a $d\ge m_0$ with $i|d$ and $n_0\ge 1$ such that the natural morphism
\begin{align} \label{mor:4}
\mathcal G(d) \otimes \mathcal G(n)
\to \mathcal G(d+n)
\end{align}
is surjective on $V$ for each $n\ge n_0$ with $i|n$. 
For $e, m\ge 1$, let $q_{e,m}$ and $r_{e,m}$ be integers such that 
$(m-1)p^e+1 = q_{e,m} d +r_{e,m}$ and $n_0 \le r_{e,m} <n_0+d$. 
Note that if $e_0|e$ and $i|m$ then $i|r_{e,m}$. 
Then, for each $m\ge1$ with $i|m$ and $e\ge 1$ with $e_0|e$, 
the natural morphism 
\begin{align} \label{mor:5}
S^{q_{e,m}}(\mathcal G(d)) \otimes \mathcal G(r_{e,m}) 
\to \mathcal G((m-1)p^e+1)
\end{align}
is surjective on $V$, 
where $S^m(?)$ denotes $m$-th symmetric product. 
Combining morphisms~(\ref{mor:3}) and~(\ref{mor:5}), 
we obtain the morphism 
\begin{align} \label{mor:6}
{F_Y^e}_* \big( S^{q_{e,m}}(\mathcal G(d)) \otimes \mathcal G(r_{e,m})\big)
\to \mathcal G(m)
\end{align}
for each $m\ge m_0$ with $i|m$ and $e\ge 1$ with $e_0|e$.
This is surjective on $V$ by the construction. 
Put $\mathcal E:=\bigoplus_{n_0 \le r <n_0 +d,~i|r} \mathcal G(r)$. 
By morphism~(\ref{mor:6}), we have the morphism 
\begin{align} \label{mor:7}
{F_Y^e}_* \big( S^{q_{e,m}}(\mathcal G(d)) \otimes \mathcal E\big)
\to \mathcal G(m)
\end{align}
for each $m\ge m_0$ with $i|m$ and $e\ge 1$ with $e_0|e$.
For each $m\ge 1$ with $i|m$, let $s_m$ be the smallest integer 
such that $\mathcal G(m) \otimes \mathcal L^{s_m}$ is globally generated. 
Taking the tensor product of morphism~(\ref{mor:7}) 
and $\mathcal O_X(lL)$, 
where $l\ge 1$ and $L$ is a Cartier divisor on $Y$ 
with $\mathcal O_Y(L)\cong \mathcal L$, 
we get the sequence of morphisms
\begin{align}
\mathcal G(m)(lL) 
& \leftarrow {F_Y^e}_*\big( 
S^{q_{e,m}}(\mathcal G(d)) \otimes \mathcal E (lp^eL) 
\big) \label{mor:8}
\\ & \cong {F_Y^e}_*\big( 
S^{q_{e,m}}(\mathcal G(d))(q_{e,m}s_d L)
\otimes \mathcal E((lp^e -q_{e,m}s_d)L) 
\big) \label{mor:9}
\\ & \leftarrow {F_Y^e}_*\left( 
\left(\bigoplus \mathcal O_Y\right) 
\otimes \mathcal E(lp^e -q_{e,m}s_d)L)
\right) \label{mor:10}
\\ & \cong \bigoplus {F_Y^e}_*\big(\mathcal E( (lp^e-q_{e,m}s_d)L) \big), 
\label{mor:11}
\end{align}
where (\ref{mor:10}) follows from the fact that 
$
S^{q_{e,m}}(\mathcal G(d))(q_{e,m}s_dL)
$
is globally generated on $V$. 
The above morphisms are surjective on $V$.
Since 
$$
\frac{lp^e -q_{e,m}s_d}{p^e} 
\xrightarrow{e\to +\infty} l -\frac{m-1}{d}s_d, 
$$
if $l>\frac{m-1}{d}s_d +jn$ and $e\gg0$, then sheaf (\ref{mor:11}) is 
globally generated by Lemma~\ref{lem:glgen}, 
and hence $\mathcal G(m)(lL)$ is globally generated on $V$. 
By the definition of $s_m$, we have 
$$
s_m \le \frac{m-1}{d}s_d +jn +1. 
$$
When $m=d$, we get $s_d \le d(jn +1)$. 
Therefore, when $m$ is not necessarily equal to $d$, if $l \ge m(jn+1)$, then 
$$
\frac{m-1}{d}s_d +jn +1 
\le \frac{m-1}{d}d(jn +1) +jn +1
=m(jn+1)
\le l, 
$$
so $\mathcal G(m)(lL)$ is globally generated on $V$, 
which is our claim. 
\end{proof}
\bibliographystyle{abbrv}
\bibliography{ref.bib}
\end{document}